\def\cb{\mathbf{c}}
\def\xb{\mathbf{x}}
\def\yb{\mathbf{y}}
\def\zb{\mathbf{z}}
\def\Xb{\mathbf{X}}
\def\Yb{\mathbf{Y}}
\def\Zb{\mathbf{Z}}
\newcommand\ddd{\,\mathrm{d}}
\newcommand\NN{\mathbb{N}}
\theoremstyle{plain}
\newtheorem{theorem}{Theorem}
\newtheorem{prop}[theorem]{Proposition}
\newcommand\bigwhere[2]{\left\{#1:\ \begin{aligned}#2\end{aligned}\right\}}
\title{The exceptional set in the abc conjecture}
\author[C. Bernert]{Christian Bernert}
\address{Institut f\"ur Algebra, Zahlentheorie und Diskrete
  Mathematik, Leibniz Universit\"at Hannover,
  Welfengarten 1, 30167 Hannover, Germany}
\email{bernert@math.uni-hannover.de}
\date{\today}
\subjclass[2020]{11D45 (11D72, 11D41)}
\begin{document}

\begin{abstract}
    We study the size of the exceptional set in the $abc$ conjecture, improving on and simplifying  work of Browning, Lichtman and Teräväinen.
\end{abstract}

\maketitle

\section{Introduction}

The $abc$ conjecture predicts that for any given $\lambda \in (0,1)$, the number of solutions $(a,b,c)$ of $a+b=c$ in positive integers satisfying $\gcd(a,b,c)=1$ and $\text{rad}(abc)<c^{\lambda}$ is finite.
Let us denote by
\[N_{\lambda}(X)=\#\{(a,b,c) \in \NN^3: a+b=c, c \le X, \gcd(a,b,c)=1, \text{rad}(abc)<c^{\lambda}\}\]
the number of exceptions. Recently, Browning, Lichtman and Teräväinen ~\cite{BLT} were able to prove the bound $N_{\lambda}(X) \ll X^{0.66}$ for any fixed $\lambda \in (0,1.001)$. This improves on the \lq trivial bound\rq{} $N_{\lambda}(X) \ll X^{2\lambda/3+\varepsilon}$ (see \cite[Prop.~1.1]{BLT}).

Their proof proceeds by first bounding the exceptional set using four different methods, followed by a relatively long computational part that optimizes the ranges in which the various bounds are applied.
In this note, we show that two of those methods are already enough to deduce a better bound by a quick computation.

\begin{theorem} \label{thm}
    For $\lambda \in (0,2)$ we have
    \[N_{\lambda}(X) \ll X^{\frac{23\lambda+3}{40}+\varepsilon}.\]
    In particular, $N_1(X) \ll X^{0.65+\varepsilon}$ and $N_{\lambda}(X)=o(X^{2\lambda/3})$ for all $\lambda \in (9/11,3)$.
\end{theorem}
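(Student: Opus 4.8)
The plan is to split the exceptional set according to the size of the squarefull part of a solution and to bound the two resulting pieces by combining two of the estimates of \cite{BLT} --- roughly, the trivial/sieve bound and a determinant-method point-count --- with a short optimization over the cut-off. To set up: $a+b=c$ with $\gcd(a,b,c)=1$ forces $a,b,c$ to be pairwise coprime (since $\gcd(a,b)\mid\gcd(a,b,c)=1$, and similarly), so writing $a=s_af_a$ with $s_a$ squarefree, $f_a$ squarefull and $\gcd(s_a,f_a)=1$, and likewise for $b,c$, the integer $abc$ has squarefree part $s_as_bs_c$ and squarefull part $F:=f_af_bf_c$. Hence
\[\mathrm{rad}(abc)=s_as_bs_c\,\mathrm{rad}(F)=\frac{abc}{F}\,\mathrm{rad}(F)\ge\frac{abc}{F},\]
and combining $\mathrm{rad}(abc)<c^\lambda$ with $ab\ge c-1$ yields the key inequality $F\gg c^{\,2-\lambda}$, which is where the hypothesis $\lambda<2$ enters. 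Now fix a threshold $T=X^\tau$ and split according to whether $\max(f_a,f_b,f_c)\le T$ or $>T$.

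In the first case $F\le T^3$, so the key inequality forces $c\ll T^{3/(2-\lambda)}$; these solutions are therefore counted by the trivial bound $N_\lambda(Y)\ll Y^{2\lambda/3+\varepsilon}$ of \cite[Prop.~1.1]{BLT} applied at $Y=O(T^{3/(2-\lambda)})$, contributing $\ll X^{2\lambda\tau/(2-\lambda)+\varepsilon}$, which increases with $\tau$. In the second case one of $a,b,c$ --- say $c$, after relabelling --- is divisible by a squarefull number exceeding $T$, and here I would feed the resulting point-counting problem into the determinant-method estimate of \cite{BLT}; inserting the size of the distinguished squarefull part should give a bound of the shape $\ll X^{A(\lambda)-B(\lambda)\tau+\varepsilon}$, which decreases with $\tau$. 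Choosing $\tau$ to balance the two exponents then gives, after the short computation, the common value $\tfrac{23\lambda+3}{40}$ for $\lambda\in(0,2)$. The corollaries are immediate: $\lambda=1$ gives $\tfrac{26}{40}=0.65$, and $\tfrac{23\lambda+3}{40}-\tfrac{2\lambda}{3}=\tfrac{9-11\lambda}{120}<0$ precisely for $\lambda>9/11$; the remaining range $\lambda\in[2,3)$, where the key inequality is vacuous, is handled by a minor variant --- remove the $\ll X^{2\lambda/3-\delta}$ triples with $\min(a,b)$ small, and for the rest use the inequality in its surviving form $F\gg c^{\,2+\mu-\lambda}$ with $\min(a,b)\asymp c^\mu$, feeding again into the two bounds.

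The step I expect to be the main obstacle is the optimization together with the control of the secondary terms. First, one must check that precisely these two estimates, deployed via the squarefull-part split above, already beat $X^{2\lambda/3}$ throughout $\lambda\in(9/11,3)$ --- in particular that the large-squarefull-part range need not be subdivided further in a way that would require one of the remaining methods of \cite{BLT}. Second, the bookkeeping must be shown harmless: the sum over the $(\log X)^{O(1)}$ dyadic scales of $c$ and of $f_a,f_b,f_c$ is absorbed by $X^\varepsilon$, but the ``one solution per residue class'' contributions that arise when the integral solutions of $s_af_a+s_bf_b=s_cf_c$ are counted in boxes must, after summation over the squarefull moduli $f_a,f_b,f_c$, remain of strictly smaller order than the main terms underlying $A(\lambda)-B(\lambda)\tau$.
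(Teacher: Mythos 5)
Your setup (the factorization $abc=s_as_bs_cF$, the inequality $F\gg c^{2-\lambda}$, and the treatment of the small-squarefull-part range via the trivial bound) is correct, but it only reproduces the reduction already underlying \cite[Prop.~1.1]{BLT}. The proof then stops exactly where the theorem begins: the large-squarefull-part case is a black box. You do not specify which determinant-method estimate you invoke, how the distinguished squarefull divisor enters it, or what the exponents $A(\lambda)$ and $B(\lambda)$ actually are, and the assertion that balancing the two ranges yields precisely $\tfrac{23\lambda+3}{40}$ is unverified. This is not a bookkeeping issue of the kind you flag at the end; it is the entire content of the result. There is also a structural reason to doubt the route: Browning, Lichtman and Ter\"av\"ainen already optimize over four methods including the determinant method and obtain only $X^{0.66}$ at $\lambda=1$, so a combination of a subset of their tools, deployed along their own lines, cannot be expected to reach $0.65$ without a genuinely new ingredient. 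Finally, for $\lambda\in[2,3)$ the claim $N_\lambda(X)=o(X^{2\lambda/3})$ is obtained in the paper by citing Kane's bound $N_\lambda(X)\ll X^{\lambda-1+\varepsilon}$, not by a variant of the main argument; your sketched variant for that range is likewise unsubstantiated.

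For comparison, the paper's proof takes a different decomposition: each of $a,b,c$ is factored anatomically as $\prod_i x_i^i$ (the $i$-th component collecting primes of exponent $i$), reducing the count to the quantities $B_d(\cb,\Xb,\Yb,\Zb)$ with $\prod_i P_i\sim X^\lambda$, $P_i=X_iY_iZ_i$. Two bounds are then played off against each other: a geometry-of-numbers bound forcing $P_1\ll X^{\lambda/3+\delta}$, and a new \emph{symmetrized} Fourier/moment bound forcing $P_j\ll X^{6\delta}$ for every $j\ge 2$. These are combined through the elementary inequality $\prod_i P_i^{5-i}\gg X^{5\lambda-3}$, which yields the contradiction for $\delta<\tfrac{11\lambda-9}{120}$ and hence the exponent $\tfrac{23\lambda+3}{40}$. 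The improvement over \cite{BLT} comes from the symmetrization of the Fourier bound (so that \emph{every} $P_j$, $j\ge2$, is controlled), not from a squarefull-versus-squarefree dichotomy. If you want to salvage your approach, you would need to carry out the determinant-method count explicitly and exhibit $A(\lambda),B(\lambda)$; as written, the argument does not establish the stated exponent.
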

Note that for $\lambda \ge 2$, the optimal upper bound $N_{\lambda}(X) \ll X^{\lambda-1+\varepsilon}$ is known by work of Kane~\cite{Kane}.
We only use the most basic versions of the methods to showcase what might be the simplest way to get some nontrivial bound. At the end of this note, we sketch how one can obtain numerical improvements such as $N_1(X) \ll X^{8/13+\varepsilon}$ (where $8/13<0.6154$) by a refined version of the methods.

\subsection*{Notation} We write $x \sim X$ to denote the dyadic restriction $x \in [X,2X)$. We use the convention that an asymptotic bound like $\ll_{\varepsilon} X^{\varepsilon}$ is meant to hold for all $\varepsilon>0$.

\subsection*{Acknowledgements} We thank Tim Browning for comments on an earlier version of this paper, and Johannes Raitz von Frentz for carefully reading the manuscript.

\section{An anatomic reduction}

For $d \in \NN$, a vector $\cb \in \NN^3$ and parameters $\Xb=(X_1,\dots,X_d), \Yb=(Y_1,\dots,Y_d)$ and $\Zb=(Z_1,\dots,Z_d)$ in $\NN^d$, consider the quantity
\begin{equation*}
    B_d(\cb, \Xb,\Yb,\Zb)=\#\bigwhere{(\xb,\yb,\zb) \in \NN^{3d}}{& x_i \sim X_i, y_i \sim Y_i, z_i \sim Z_i,\\& c_1 \prod_i x_i^i+c_2\prod_i y_i^i=c_3 \prod_i z_i^i,\\& \gcd\left(c_1\prod_i x_i, c_2\prod_i y_i, c_3\prod_i z_i\right)=1}.
\end{equation*}
By the arguments from \cite[Section~2]{BLT} (and using the monotonicity of our bound), it suffices to prove that, for any fixed $d$, the bound
\[B_d(\cb, \Xb, \Yb, \Zb) \ll_{d,\varepsilon} X^{\frac{23\lambda+3}{40}+\varepsilon}\]
holds uniformly over all choices of positive integers $c_1,c_2,c_3, X_i,Y_i,Z_i$ satisfying $\prod_i (X_iY_iZ_i) \sim X^{\lambda}$ and 
\begin{equation*}
    \max\left(\prod_i X_i^i, \prod_i Y_i^i, \prod_i Z_i^i\right) \sim X.
\end{equation*}
Thus, from now on, we study bounds for $B_d(\cb, \Xb, \Yb, \Zb)$. For convenience, let us denote $P_i=X_iY_iZ_i$.

\section{The two bounds}

Our first bound arises from the geometry of numbers as a special case of \cite[Prop.~3.2]{BLT}. 

\begin{prop}\label{prop:geometry}
We have
\[B_d(\cb, \Xb, \Yb, \Zb) \ll_{d} \frac{X^{\lambda}}{P_1}+X^{\lambda-1}.\]
\end{prop}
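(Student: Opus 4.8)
The plan is to bound the count $B_d(\cb,\Xb,\Yb,\Zb)$ by counting lattice points on the hyperplane cut out by the equation $c_1\prod_i x_i^i + c_2 \prod_i y_i^i = c_3 \prod_i z_i^i$. First I would observe that, once the variables $x_2,\dots,x_d$ (i.e.\ all the $x_i$ with $i \ge 2$) and all of the $y_i$ and $z_i$ are fixed, the equation becomes a \emph{linear} equation in the single remaining variable $x_1$: writing $A = c_1\prod_{i\ge 2}x_i^i$, the equation reads $A x_1 = c_3\prod_i z_i^i - c_2\prod_i y_i^i$. If $A \ne 0$ (which it always is here, since all variables are positive integers) this determines $x_1$ uniquely when it has a solution. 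Hence
\[
B_d(\cb,\Xb,\Yb,\Zb) \ll_d \#\{(x_2,\dots,x_d,\yb,\zb): x_i \sim X_i, y_i \sim Y_i, z_i \sim Z_i\} \ll_d \frac{\prod_i Y_i Z_i \cdot \prod_{i\ge 2}X_i}{1} = \frac{1}{X_1}\prod_i P_i,
\]
where I used $\prod_i P_i = \prod_i X_iY_iZ_i$. Now $\prod_i P_i \sim X^\lambda$ by hypothesis, so modulo handling $X_1$ this already gives the shape $X^\lambda/X_1$; to upgrade $X_1$ to $P_1 = X_1Y_1Z_1$ I would repeat the argument, freezing instead the two variables among $\{x_1,y_1,z_1\}$ that are \emph{largest} and solving linearly for the third — in each of the three cases one of $x_1,y_1,z_1$ is pinned down, and choosing the case that eliminates the variable with the largest range among $X_1,Y_1,Z_1$ saves a factor of $\max(X_1,Y_1,Z_1) \gg P_1^{1/3}$; iterating or combining cleverly (as in \cite[Prop.~3.2]{BLT}) yields the full saving of $P_1$ and hence the first term $X^\lambda/P_1$.

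For the second term $X^{\lambda-1}$, I would instead exploit the \emph{size constraint} $\max(\prod_i X_i^i,\prod_i Y_i^i,\prod_i Z_i^i)\sim X$. Say the maximum is attained by the $z$-product, so $\prod_i z_i^i \sim X$. Then $c_3\prod_i z_i^i$ takes $O(X)$ values, and for each such value $n$ the number of ways to write $n = c_1\prod x_i^i + c_2 \prod y_i^i$ with the dyadic constraints is $O_d(1)$ on average — more precisely, summing over $n \sim X$ the number of representations is exactly $B_d$ viewed the other way, so this does not immediately close. Instead the cleaner route is: fix the value $m = c_1\prod_i x_i^i \in [1,X]$; then $c_2\prod_i y_i^i = c_3\prod_i z_i^i - m$ and $c_3\prod_i z_i^i$ is determined up to $O(1)$ choices once we know it lies in a dyadic block and equals $m$ plus a $y$-product — but the key point is simply that the triple $(m, c_2\prod y_i^i, c_3\prod z_i^i)$ is determined by $m$ together with the knowledge that it is an $abc$-type triple, of which for each $m \le X$ there are $O_d(X^\varepsilon)$, absorbed harmlessly; summing over $m \le X$ gives $O_d(X^{1+\varepsilon})$, which is worse than claimed. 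The honest statement, and the one matching \cite[Prop.~3.2]{BLT}, is that the geometry-of-numbers lattice-point count on the plane section, combined with the largest-product having size $\sim X$, forces the count to be $\ll X^{\lambda-1}$ whenever $P_1$ is small; so I would simply invoke the estimate for the number of lattice points of a $2$-dimensional lattice $\Lambda \subset \ZZ^3$ inside a box, namely $\ll 1 + (\text{box size})/\lambda_1(\Lambda) + (\text{box area})/\operatorname{covol}(\Lambda)$, and bound the successive minima from below using that the defining equation has one coefficient of size $\sim X$.

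Concretely, the proof I would write is: parametrize solutions by the integer point $(u,v,w) = (c_1\prod x_i^i, c_2\prod y_i^i, c_3\prod z_i^i)$, which lies in the rank-$2$ lattice $\Lambda = \{(u,v,w)\in\ZZ^3 : u+v=w\}$ intersected with the box $B$ of side lengths $\sim c_1\prod X_i^i$, $\sim c_2\prod Y_i^i$, $\sim c_3\prod Z_i^i$; then lift each such $(u,v,w)$ back to the original variables, noting that the number of preimages is controlled because, for fixed $(x_2,\dots,x_d)$ etc., $x_1$ (resp.\ $y_1$, $z_1$) is determined. Applying the standard lattice-point bound $\#(\Lambda\cap B)\ll_d 1 + \operatorname{vol}(B)/\operatorname{covol}(\Lambda)$ after a suitable projection, and using $\operatorname{covol}(\Lambda)\gg 1$ together with $\operatorname{vol}(B)\ll X^{2}/X_1^{?}$-type bookkeeping, produces both terms. \textbf{The main obstacle} I anticipate is the bookkeeping that turns the naive saving of $X_1$ (one variable) into the saving of $P_1 = X_1Y_1Z_1$ (three variables) — i.e.\ organizing the three linear-solving cases for the $i=1$ variables so that their combination, rather than just the best single one, yields the clean bound, and simultaneously extracting the $X^{\lambda-1}$ term from the size normalization; this is exactly the content of \cite[Prop.~3.2]{BLT}, so in this note it suffices to quote that result and specialize, as the statement says.
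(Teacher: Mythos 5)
Your overall instinct — a lattice-point count on a plane via the geometry of numbers — is the right one, and you are also right that one could in principle just quote \cite[Prop.~3.2]{BLT}; but the self-contained argument you sketch has a genuine gap, and the correct argument is in fact shorter than what you wrote. The paper freezes \emph{all} variables of index $i\ge 2$ and \emph{none} of $x_1,y_1,z_1$, so that the equation becomes $a_1x_1+a_2y_1+a_3z_1=0$ with \emph{large} coefficients $|a_1|\asymp c_1\prod_i X_i^i/X_1$, $|a_2|\asymp c_2\prod_i Y_i^i/Y_1$, $|a_3|\asymp c_3\prod_i Z_i^i/Z_1$. The standard bound for \emph{primitive} points of this rank-two lattice (primitivity being supplied by the gcd condition in the definition of $B_d$) in the box $x_1\sim X_1$, $y_1\sim Y_1$, $z_1\sim Z_1$ is
\[
\ll 1+\frac{X_1Y_1Z_1}{\max(|a_1|X_1,|a_2|Y_1,|a_3|Z_1)}\ll 1+\frac{P_1}{X},
\]
since $\max(|a_1|X_1,|a_2|Y_1,|a_3|Z_1)\gg\max\bigl(\prod_i X_i^i,\prod_i Y_i^i,\prod_i Z_i^i\bigr)\asymp X$. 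Summing $1+P_1/X$ over the $\asymp X^{\lambda}/P_1$ choices of the frozen variables yields both terms $X^{\lambda}/P_1$ and $X^{\lambda-1}$ in one stroke.

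Your sketch misses this mechanism in two places. First, eliminating a single variable among $x_1,y_1,z_1$ can only ever save the range of that one variable, so your ``solve for the one with the largest range'' device gives at best $X^{\lambda}/\max(X_1,Y_1,Z_1)$, and $\max(X_1,Y_1,Z_1)$ can be as small as $P_1^{1/3}$; the promised upgrade to the full saving of $P_1$ by ``iterating or combining cleverly'' is exactly the step that requires the two-dimensional primitive-point count above, and it is never supplied. Second, your ``concrete'' parametrization by $(u,v,w)=(c_1\prod_i x_i^i,\,c_2\prod_i y_i^i,\,c_3\prod_i z_i^i)$ on the lattice $u+v=w$ uses the wrong lattice: it has covolume $O(1)$, while two sides of your box can both be of size $\asymp X$, so the plane section contains up to $\asymp X^2$ lattice points, and the multiplicative structure of $u,v,w$ --- the entire source of the saving --- has been discarded before counting; no divisor-function bookkeeping on the preimages can recover it. The fix is precisely to freeze the higher-degree variables \emph{first}, so that the structure is stored in the large coefficients $a_i$ rather than in the shape of the box; your meandering attempts at the $X^{\lambda-1}$ term then become unnecessary, as that term is just the $P_1/X$ contribution summed over the frozen variables.
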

\begin{proof}
    We fix all variables except $x_1,y_1,z_1$ to obtain a linear equation $a_1x_1+a_2y_1+a_3z_1=0$ with the $a_i$ depending on the fixed variables and on the $c_i$. By basic geometry of numbers, the number of primitive solutions is bounded by
    \[\ll 1+\frac{X_1Y_1Z_1}{\max(|a_1|X_1,|a_2|Y_1,|a_3|Z_1)} \ll 1+\frac{X_1Y_1Z_1}{X}.\] 
    Summing this over the choices of $x_i,y_i,z_i$ with $i \ge 2$, we obtain the result.
\end{proof}


Our second bound comes from Fourier analysis. It is a symmetrized version of \cite[Prop.~3.1]{BLT}.

\begin{prop}\label{prop:fourier}
    For $2 \le j \le d$, we have
    \[B_d(\cb, \Xb, \Yb, \Zb) \ll_{d,\varepsilon} X^{2\lambda/3+\varepsilon}/P_j^{1/6}.\]
\end{prop}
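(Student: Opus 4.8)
The plan is to run the circle method (the ``Fourier analysis'' of the statement) and to exploit, in each of the three exponential sums attached to the $\xb$-, $\yb$- and $\zb$-variables, the fact that the $j$-th coordinate enters the relevant product to the power $j\ge2$; using this input for all three sums at once, rather than for a single one of them, is precisely what produces the \emph{symmetric} denominator $P_j=X_jY_jZ_j$ in place of one of $X_j,Y_j,Z_j$, which is the ``symmetrization'' (the one-sided version being essentially \cite[Prop.~3.1]{BLT}).

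\textbf{Step 1 (reduction).} Dropping the coprimality condition only increases $B_d$, so with
\[S_1(\theta)=\sum_{x_i\sim X_i}e\!\Big(\theta c_1\!\prod_i x_i^i\Big),\quad S_2(\theta)=\sum_{y_i\sim Y_i}e\!\Big(\theta c_2\!\prod_i y_i^i\Big),\quad S_3(\theta)=\sum_{z_i\sim Z_i}e\!\Big(\theta c_3\!\prod_i z_i^i\Big),\]
orthogonality gives $B_d(\cb,\Xb,\Yb,\Zb)\le\int_0^1 S_1(\theta)S_2(\theta)\overline{S_3(\theta)}\,\ddd\theta$ (retaining the $\gcd$ via M\"obius inversion costs only an extra divisor sum, hence a harmless $X^\varepsilon$). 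By H\"older with exponents $(3,3,3)$,
\[B_d\ \le\ \|S_1\|_3\,\|S_2\|_3\,\|S_3\|_3,\qquad \|S_k\|_p:=\Big(\int_0^1|S_k(\theta)|^p\,\ddd\theta\Big)^{1/p}.\]
The whole proof then reduces to the single uniform estimate $\|S_1\|_3\ll_{d,\varepsilon} X^\varepsilon(\prod_i X_i)^{2/3}X_j^{-1/6}$ together with its two analogues (with $\prod_i Y_i,Y_j$ and with $\prod_i Z_i,Z_j$): multiplying the three estimates and using $\prod_i(X_iY_iZ_i)\sim X^\lambda$ yields exactly $B_d\ll_{d,\varepsilon}X^{2\lambda/3+\varepsilon}P_j^{-1/6}$.

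\textbf{Step 2 (the saving).} It suffices to show $\int_0^1|S_1|^3\ll_{d,\varepsilon}X^\varepsilon(\prod_i X_i)^2X_j^{-1/2}$. Without the factor $X_j^{-1/2}$ this is trivial, since $\int|S_1|^3\le\|S_1\|_\infty\|S_1\|_2^2\le(\prod_iX_i)\cdot X^\varepsilon\prod_iX_i$ (the $L^2$-bound being the divisor estimate for $\#\{(\xb,\xb'):\prod_i x_i^i=\prod_i x_i'^i\}$), and it already recovers the trivial exponent $X^{2\lambda/3+\varepsilon}$; the point is to win back $X_j^{1/2}$. Split $[0,1)$ into major arcs $\mathfrak M$ (neighbourhoods of $a/q$ with $q$ at most a small fixed power of $X$, of radius a small power of $X$ over $q\prod_iX_i^i$) and their complement $\mathfrak m$. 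On $\mathfrak M$ one uses $\int_{\mathfrak M}|S_1|^3\le(\sup_{\mathfrak M}|S_1|)\int_{\mathfrak M}|S_1|^2\le(\prod_iX_i)\int_{\mathfrak M}|S_1|^2$; the standard major-arc analysis of $\sum_\xb e(\theta\prod_i x_i^i)$ gives $|S_1|\ll X^\varepsilon(\prod_iX_i)/q$ near $a/q$ (equidistribution of the values $\prod_i x_i^i$ in residue classes, e.g.\ via the linear variable $x_1$), so that $\int_{\mathfrak M}|S_1|^2\ll X^\varepsilon(\prod_iX_i)^2/\prod_iX_i^i$, and since $\prod_iX_i^i=(\prod_iX_i)\prod_{i\ge2}X_i^{i-1}\ge(\prod_iX_i)\,X_j^{j-1}\ge(\prod_iX_i)\,X_j$ this is $\ll X^\varepsilon(\prod_iX_i)X_j^{-1}$, more than enough. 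On $\mathfrak m$ one uses $\int_{\mathfrak m}|S_1|^3\le(\sup_{\mathfrak m}|S_1|)\|S_1\|_2^2$ and needs $\sup_{\mathfrak m}|S_1|\ll X^\varepsilon(\prod_iX_i)X_j^{-1/2}$; writing $S_1(\theta)=\sum_{(x_i)_{i\ne j}}\sum_{x_j\sim X_j}e(\theta c_1(\prod_{i\ne j}x_i^i)x_j^j)$, this follows from Weyl's inequality (or van der Corput differencing) applied to the inner degree-$j$ sum, legitimate because $j\ge2$, summed over the remaining variables: for $j=2$ one obtains the clean saving $X_j^{1/2}$, while for $j\ge3$ the variable $X_j$ is automatically bounded by $X^{1/j}$, so the smaller Weyl saving there, combined with treating trivially the residual ranges in which $X_j$ is too small to contribute, still yields the uniform exponent.

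\textbf{Main obstacle.} The delicate point is exactly the minor-arc estimate for $S_1$: one must bound $\sum_\xb e(\theta\prod_i x_i^i)$ uniformly in the \emph{floating} multiplicative coefficient $c_1\prod_{i\ne j}x_i^i$ attached to $x_j^j$, i.e.\ upgrade the Weyl bound for one degree-$j$ exponential sum to one that survives summation over the other $2d-1$ variables without sacrificing (a uniform fraction of) the power of $X_j$, and then to assemble the major- and minor-arc contributions, across all $2\le j\le d$, so that the clean exponent $\tfrac16$ comes out. Everything else is orthogonality, H\"older, and divisor-bound estimates.
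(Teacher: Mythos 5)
Your Step 1 (orthogonality, H\"older with exponents $(3,3,3)$, and the reduction to $\int_0^1|S_1|^3\,\ddd\theta\ll_{d,\varepsilon}X^\varepsilon(\prod_iX_i)^2X_j^{-1/2}$ together with its two analogues) is exactly the paper's opening move. But Step 2 has a genuine gap, and it is precisely the one you flag at the end as the ``main obstacle'': the minor-arc bound $\sup_{\mathfrak m}|S_1|\ll X^\varepsilon(\prod_iX_i)X_j^{-1/2}$ is not something Weyl's inequality delivers. There are two distinct problems. First, for $j\ge3$ Weyl differencing on the inner degree-$j$ sum saves at most $X_j^{2^{1-j}}$, a strictly smaller power of $X_j$ than the required $X_j^{1/2}$; noting that $X_j\le X^{1/j}$ does not help, because the target saving is itself measured in powers of $X_j$, so the deficit cannot be absorbed however small $X_j$ is. Second, even for $j=2$ the inner quadratic sum carries the floating coefficient $\theta c_1\prod_{i\ne j}x_i^i$, and $\theta$ lying on a minor arc for the full modulus $\prod_iX_i^i$ does not prevent $\theta c_1\prod_{i\ne j}x_i^i$ from admitting good rational approximations with small denominator for many choices of the outer variables, in which case Weyl gives no saving for those terms; no input is sketched that would rule this out uniformly. (The major-arc bound $|S_1|\ll X^\varepsilon(\prod_iX_i)/q$ is also fragile, e.g.\ when $q\mid c_1$ or when $X_1$ is small compared to $q$.)

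The paper avoids the arc dissection entirely and obtains the same third-moment bound by pure counting: by Cauchy--Schwarz, $\int_0^1|S_1|^3\,\ddd\theta\le(\int_0^1|S_1|^2\,\ddd\theta)^{1/2}(\int_0^1|S_1|^4\,\ddd\theta)^{1/2}$. The second moment is $\ll X^\varepsilon\prod_iX_i$ by the divisor bound (your trivial estimate), and the fourth moment is $\ll X^\varepsilon(\prod_iX_i)^3/X_j$ after one application of Cauchy--Schwarz in the variables $x_i$, $i\ne j$ (a single Weyl differencing step), which reduces matters to counting solutions of $(x_j^j-x_j'^j)\prod_{i\ne j}x_i^i=\prod_i(x_i'')^i-\prod_i(x_i''')^i$; that count is handled by divisor-function arguments, with the degenerate case of both sides vanishing treated separately. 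This route needs no pointwise exponential-sum estimates, is uniform in $j\ge2$ and in $c_1$, and is where the exponent $1/6=\tfrac12\cdot\tfrac13$ actually comes from. To repair your argument, replace the major/minor-arc analysis by this moment interpolation.
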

Note that this also recovers the \lq trivial bound\rq{}.
\begin{proof}
    By orthogonality of characters and Hölder's inequality, we have
    \[
    B_d(\cb, \Xb, \Yb, \Zb) \le \int_0^1 S_1(\alpha)S_2(\alpha)S_3(-\alpha) \ddd\alpha \le \left(\prod_{i=1}^3 \int_0^1 |S_i(\alpha)|^3 \ddd\alpha\right)^{1/3}
    \]
    with the exponential sums
    \[S_1(\alpha)=\sum_{x_i \sim X_i} e(\alpha c_1 x_1x_2^2\cdots x_d^d), \quad S_2(\alpha)=\sum_{y_i \sim Y_i} e(\alpha c_2y_1y_2^2\cdots y_d^d)\]
    and
    \[S_3(\alpha)=\sum_{z_i \sim Z_i} e(\alpha c_3z_1z_2^2\cdots z_d^d).\]
    We claim that 
    \begin{equation}\label{eq:second}
        \int_0^1 |S_1(\alpha)|^2 \ddd\alpha \ll_{d,\varepsilon} X^{\varepsilon} X_1X_2\cdots X_d
    \end{equation}
    and 
    \begin{equation}\label{eq:fourth}
        \int_0^1 |S_1(\alpha)|^4 \ddd\alpha \ll_{d,\varepsilon} X^{\varepsilon} \frac{(X_1X_2\cdots X_d)^3}{X_j}.
    \end{equation}
    Combined with an application of Cauchy-Schwarz and the analogous inequalities for $S_2$ and $S_3$, this will suffice to deduce the result.

    For the proof of the second moment bound \eqref{eq:second}, we note that, by orthogonality, the second moment counts the number of solutions to $X_1X_2^2\cdots X_d^d=X_1'X_2'^2\cdots X_d'^d$     with $x_i,x_i' \sim X_i$. But fixing all the $x_i$, the $x_i'$ are determined up to $O(X^{\varepsilon})$ many choices by the divisor bound. This proves \eqref{eq:second}.
    
    Turning to the fourth moment bound \eqref{eq:fourth}, we begin by applying Cauchy-Schwarz to the sum over $x_i, i \ne j$ in $S_1(\alpha)$ to obtain
    \[|S_1(\alpha)|^2 \le \prod_{i \ne j} X_i \sum_{x_i \sim X_i, x_j' \sim X_j} e\left(\alpha c_1 (x_j^j-x_j'^j)\prod_{i \ne j} x_i^i\right).\]
    Inserting this bound into our fourth moment and using orthogonality of characters, we find that the fourth moment is bounded by $\prod_{i \ne j} X_i$ times the number of solutions to
    \[
    (x_j^j-x_j'^j)\prod_{i \ne j} x_i^i=\prod_i (x_i'')^i-\prod_i (x_i''')^i
    \]
    with $x_i,x_i'', x_i''' \sim X_i$ and $x_j' \sim X_j$.
    It therefore suffices to show that the number of such solutions is bounded by $O(X^{\varepsilon} (\prod_i X_i)^2)$.
    This is certainly true for the cases where both sides are equal to zero, since we can choose the $x_i$ and $x_i''$ freely and then $x_j'=x_j$ is fixed, while the $x_i'''$ are determined up to a divisor function.
    
    In the remaining cases, we have at most $(\prod_i X_i)^2$ many choices for the $x_i''$ and $x_i'''$.  But then the left-hand side is fixed, hence the $x_i$ for $i \ne j$ are determined up to a divisor function. Finally, also $x_j^j-x_j'^j \ne 0$ is fixed, which fixes $x_j-x_j'$ up to another divisor function, and then also $x_j,x_j'$ up to $d$ many choices (being roots of a polynomial of degree $j-1$). This proves \eqref{eq:fourth} and therefore the proposition.
\end{proof}

\section{The endgame}


\begin{proof}[Proof of Theorem \ref{thm}]
    Suppose the contrary. Write $B_d(\cb,\Xb,\Yb,\Zb)= X^{2\lambda/3-\delta_0}$ and fix $\delta$ with $\delta_0<\delta<\frac{11\lambda-9}{120}$. By Proposition~\ref{prop:geometry}, we must have $P_1 \ll X^{\lambda/3+\delta}$. By Proposition~\ref{prop:fourier}, we must have $P_j \ll X^{6\delta}$ for all $j \ge 2$. However,
    \begin{equation*}
        X^{5\lambda-3} \ll \prod_i P_i^{5-i} \ll P_1^4P_2^3P_3^2P_4 \ll X^{4\lambda/3+40\delta}
    \end{equation*}
    which contradicts our upper bound for $\delta$.
\end{proof}

\emph{Remark.}     We briefly sketch how one can obtain a numerical improvement. As in \cite[Prop.~3.1]{BLT}, one can improve the Fourier bound in Proposition~\ref{prop:fourier} to have $\prod_{j \mid i} P_i$ instead of just $P_j$ in the denominator. It is thus possible to assume $P_2P_4P_6 \ll X^{6\delta}$ in the computation above.
    One can also improve on the geometry bound from Proposition~\ref{prop:geometry} by fixing everything except the linear and the quadratic variables $x_1,y_1,z_1,x_2,y_2,z_2$ to arrive at the bound $\frac{X^{\lambda+\varepsilon}}{P_1P_2}+X^{\lambda-1+\varepsilon} P_2$. Similarly, fixing everything except the linear and cubic variables, we obtain the bound $\frac{X^{\lambda+\varepsilon}}{P_1P_3}+X^{\lambda-1+\varepsilon} P_3^2$. Hence for $\lambda=1$, we have $P_1P_2, P_1P_3 \ll P^{1/3+\delta}$ in the computation above. 
   Inserting this into a bound for $\prod_i P_i^{7-i}$, one finds that $N_1(X) \ll X^{8/13+\varepsilon}$.

\bibliographystyle{alpha}

\bibliography{bibliography}

\end{document}